\documentclass[12pt, reqno]{amsart}
\usepackage[utf8]{inputenc}

\usepackage{latexsym}
\usepackage{amsmath}
\usepackage{amssymb, amscd, amsthm, mathtools}
\usepackage[all]{xy}
\usepackage{multirow}
\usepackage{xcolor} 
\usepackage{tikz}
\usepackage{tikz-cd}
\usepackage{mathrsfs}

\usepackage[shortlabels,inline]{enumitem}
\SetEnumerateShortLabel{a}{\textup{(\alph*)}}
\SetEnumerateShortLabel{A}{\textup{(\Alph*)}}
\SetEnumerateShortLabel{1}{\textup{(\arabic*)}}
\SetEnumerateShortLabel{i}{\textup{(\roman*)}}
\SetEnumerateShortLabel{I}{\textup{(\Roman*)}}

\newcommand{\Q}{\mathbb{Q}}
\newcommand{\Z}{\mathbb{Z}}

\newcommand{\R}{\mathbb{R}}

\newcommand{\fa}{\mathfrak{a}}
\newcommand{\fb}{\mathfrak{b}}

\newcommand{\CL}{\mathrm{CL}}    

\newtheorem{theorem}{Theorem}[section] 

\newtheorem{proposition}[theorem]{Proposition}
\newtheorem{Conjecture}[theorem]{Conjecture} 
\newtheorem{remark}[theorem]{Remark} 
\newtheorem{lemma}[theorem]{Lemma}

\begin{document}
\title{{On imaginary quadratic fields with non-cyclic class groups}}
\author{Yi Ouyang$^{1,2}$ , Qimin Song$^1$ and Chenhao Zhang$^1$}
\address{$^1$School of Mathematical Sciences, Wu Wen-Tsun Key Laboratory of Mathematics,   University of Science and Technology of China, Hefei 230026, Anhui, China}
	
\address{$^2$Hefei National Laboratory, University of Science and Technology of China, Hefei 230088, China}
	
	\email{yiouyang@ustc.edu.cn}	
	\email{sqm2020@mail.ustc.edu.cn}
	\email{chhzh@mail.ustc.edu.cn}
	
\thanks{Partially supported by NSFC (Grant No. 12371013) and  Innovation Program for Quantum Science and Technology (Grant No. 2021ZD0302902).}
	
\subjclass[2020]{11R29, 11R11}
\keywords{class numbers, imaginary quadratic fields}

\begin{abstract} For a fixed abelian group $H$, let $N_H(X)$ be the number of square-free positive integers $d\leq X$ such that $H\hookrightarrow \CL(\Q(\sqrt{-d}))$. We obtain asymptotic lower bounds for $N_H(X)$ as $X\to\infty$ in two cases:  $H=\Z/g_1\Z\times (\Z/2\Z)^l$ for $l\geq 2$ and $2\nmid g_1\geq 3$, $H=(\Z/g\Z)^2$ for $2\nmid g\geq 5$. More precisely, for any $\epsilon >0$, we showed $N_H(X)\gg X^{\frac{1}{2}+\frac{3}{2g_1+2}-\epsilon}$ when $H=\Z/g_1\Z\times (\Z/2\Z)^l$ for $l\geq 2$ and $2\nmid g_1\geq 3$. For the second case, under a well known conjecture for square-free density of integral multivariate polynomials, for any $\epsilon >0$, we showed $N_H(X)\gg X^{\frac{1}{g-1}-\epsilon}$ when $H=(\Z/g\Z)^2$ for $ g\geq 5$. The first case is an adaptation of Soundararajan's results for $H=\Z/g\Z$, and the second conditionally improves the bound $X^{\frac{1}{g}-\epsilon}$ due to Byeon and the bound $X^{\frac{1}{g}}/(\log X)^{2}$ due to Kulkarni and Levin.
\end{abstract}
\maketitle	

\section{Introduction}
In this note we assume $d>1$ is a square-free integer if there is no further notice. Let $\CL(-d)$  be the class group of  the imaginary quadratic field $\Q(\sqrt{-d})$.	Let $H$ be a fixed abelian group. For $X>0$,  let
 \begin{equation}
 	N_H(X)= \ \#\{d\mid d\leq X,\ \exists\ \text{inclusion}\  H \hookrightarrow \CL(-d)\}.
 \end{equation}
There is a lot of interest to study the density/asymptotic behavior of $N_H(X)$ as $X\to\infty$ for different $H$.  

 Set
 \[ H_1=\Z/g\Z, \quad H_2=(\Z/2\Z)^l \times \Z/g_1\Z,\quad H_3=\Z/g\Z\times \Z/g\Z.    \]    
In the literature, $N_{H_1}(X)$ is denoted as $N_g(X)$ and $N_{H_3}(X)$  as $N^-(g^2;X)$. It is believed that
 \[ N_{g}(X)\sim C_{g}X \]
where $C_g$ is a positive constant depending only on $g$. In particular, when $g$ is an odd prime, H. Cohen and H. W. Lenstra~\cite{ref4} conjectured that
 \[ C_{g}=\frac{6}{{\pi}^{2}}\left(1-\prod_{j=1}^{\infty}(1-g^{-j})\right). \] 
There are similar conjectures for  $N^{-}(g^{2};X)$.

Ankeny and Chowla~\cite{ref12} showed that $N_{g}(X)$ tends to infinity with X , and in fact their method imply $N_{g}(X)\gg X^{1/2}$. Murty~\cite{ref2} proved that $N_{g}(X)\gg X^{\frac{1}{2}+\frac{1}{g}-\epsilon}$ when $X\to \infty$. Soundararajan~\cite{ref1} improved Murty's bound and showed $N_{g}(X)\gg X^{\frac{1}{2}+\frac{2}{g}-\epsilon}$ for $g\equiv 0 \mod 4$, and $N_{g}(X)\gg X^{\frac{1}{2}+\frac{3}{g+2}-\epsilon}$ for $g\equiv 2 \mod 4$. Noted that $N_{g}(X)\geqslant N_{2g}(X)$, Soundararajan's result  contains the bound for $N_{g}(X)$ when $g$ is odd. For $g=3$, Health-Brown~\cite{ref3} showed that $N_{3}(X)\gg X^{\frac{9}{10}-\epsilon}$.  Byeon~\cite{ref6} showed that $N^{-}(g^{2};X)\gg X^{\frac{1}{g}-\epsilon}$ for odd integers $g$. For $g=3$,  Yu~\cite{ref5} proved that $N^{-}(3^{2};X)\gg X^{\frac{1}{2}-\epsilon}$.  Kulkarni and Levin~\cite{ref11} showed that $N^{-}(g^{2};X)\gg X^{\frac{1}{g}}/(\log X)^{2}$ for any integer $g$.

In this note we make a further study of the asymptotic lower  bounds for $N_{H_2}(X)$  and $N_{H_3}(X)$  as $X\to \infty$.  Based on Soundararajan's method, we get a  bound for $N_{H_2}(X)$ when $l\geq 2$ and $g_1\geq 3$ (Theorem~\ref{thm:11}). For $g\geq 5$,	we construct a family of imaginary quadratic fields whose ideal class groups have subgroups isomorphic to $\Z/g\Z\times \Z/g\Z$. Based on this construction, we obtain a bound for $N_{H_3}(X)$ for $ g\geq 5$  under a well known conjecture for square-free density of integral multivariate polynomials (Theorem~\ref{thm:12}).\\
\textbf{Notation}: $\mu (n)$ stand for the M\"{o}bius function; $f(x)\ll g(x)$ or $f(x)=O(g(x))$ means that there is a constant $c>0$ such that $|f(x)|\leqslant cg(x)$; $f(x)\asymp g(x)$ means that $f(x)=O(g(x))$ and $g(x)=O(f(x))$; $f\sim g$ means that $\lim_{x\rightarrow \infty} \frac{f(x)}{g(x)}=1$.

\section{Bound for $N_{H_2}(X)$}
		
\begin{theorem}\label{thm:11} Suppose $l\geqslant 2$ and $2\nmid g_{1}\geqslant 3$. Then as $X\to \infty$, we have
\begin{equation*}
		N_{H_2}(X)= \#\{ d\leq X:\ \exists\ (\Z/2\Z)^l \times \Z/g_1\Z \hookrightarrow \CL(-d)\} \gg X^{\frac{1}{2}+\frac{3}{2g_1+2}-\epsilon}
\end{equation*}
for any $\epsilon >0$.
\end{theorem}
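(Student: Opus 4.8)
The plan is to exhibit a single squarefree $d$ whose class group carries the two factors of $H_2$ by independent mechanisms. Since $g_1$ is odd, $\CL(-d)$ splits as the direct product of its $2$-primary part and its odd part, so an inclusion $(\Z/2\Z)^l\times\Z/g_1\Z\hookrightarrow\CL(-d)$ amounts to an inclusion $(\Z/2\Z)^l$ into the $2$-part together with $\Z/g_1\Z$ into the odd part. For the first I would use genus theory: the $2$-rank of $\CL(-d)$ is at least $\omega(d)-1$, so it suffices to force $\omega(d)\ge l+1$. For the second I would use the construction behind Soundararajan's bound for the class $g\equiv 2\pmod 4$, applied to $g=2g_1$; this is exactly the value for which $3/(g+2)=3/(2g_1+2)$, matching the claimed exponent.

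To realize the odd factor I would start from integer solutions of $d=4n^{2g_1}-m^2$. Each such solution makes the ideal $\mathfrak{a}$ of norm $n$ satisfy
\[ \mathfrak{a}^{2g_1}=\bigl(\tfrac{m+\sqrt{-d}}{2}\bigr),\qquad N\mathfrak{a}=n, \]
so $[\mathfrak{a}]^{2g_1}=1$. Choosing $n$ squarefree, composed of split primes and not a perfect square, and using the size bound $4n^{g_1}<d$ (valid since $n\asymp X^{1/(2g_1)}$ while $d\asymp X$), any generator $\beta=\tfrac{u+v\sqrt{-d}}{2}$ of a power $\mathfrak{a}^{e}$ with $e\mid 2g_1$, $e\le g_1$, must have $v=0$, so $n^{e}$ is a perfect square; for odd $e$ this contradicts $n$ being non-square, and the even $e$ are excluded by the split factorization of $(n)$. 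Hence $[\mathfrak{a}]$ has order exactly $2g_1$, and $[\mathfrak{a}]^2$ is an order-$g_1$ element of the odd part, giving $\Z/g_1\Z\hookrightarrow\CL(-d)$. Soundararajan's refinement of this construction supplies $\gg X^{1/2+3/(2g_1+2)-\epsilon}$ distinct squarefree $d\le X$ of this kind, presented as values of an explicit family in free parameters.

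The genuinely new step is to intersect this family with $\omega(d)\ge l+1$ at no cost to the exponent. I would fix distinct odd primes $p_1,\dots,p_l$ and set $P=p_1\cdots p_l$. Because $4n^{2g_1}=(2n^{g_1})^2$, restricting the parameter $m$ (for $P\nmid n$) to the class $m\equiv 2n^{g_1}\pmod P$ makes $P\mid(2n^{g_1}-m)\mid d$; this keeps a proportion $\gg 1/P$ of the admissible $m$ and is compatible by CRT, since $P$ is odd, with the conditions modulo $4$ that fix the discriminant. A squarefree sieve then retains, up to a factor $X^{\epsilon}$, those $d$ with $P\parallel d$ and $d/P$ squarefree; as $d\asymp X$ this yields $\omega(d)\ge l+1$. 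The same device applies verbatim to Soundararajan's refined family, which likewise carries a free $m$-type parameter running over progression-stable intervals, so the surviving count remains $\gg X^{1/2+3/(2g_1+2)-\epsilon}$.

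Putting these together, genus theory gives $(\Z/2\Z)^l\hookrightarrow\CL(-d)$ from $\omega(d)\ge l+1$, while $[\mathfrak{a}]^2$ gives $\Z/g_1\Z$ in the odd part; since these primary parts are complementary, $(\Z/2\Z)^l\times\Z/g_1\Z\hookrightarrow\CL(-d)$ for each of the $\gg X^{1/2+3/(2g_1+2)-\epsilon}$ integers produced, which is the theorem. I expect the main obstacle to be quantitative: reproducing Soundararajan's count of \emph{distinct} squarefree $d$ — the control of representation multiplicities and the squarefree sieve — after the congruence restrictions have been imposed, and verifying that neither the uniform exactness of the order nor the sieve constants degrade the exponent.
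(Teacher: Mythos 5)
Your high-level plan --- genus theory for the $(\Z/2\Z)^l$ factor via $\omega(d)\ge l+1$, a Soundararajan-type family for the odd factor, and congruence restrictions forcing prescribed primes into $d$ at only constant cost --- is exactly the strategy of the paper's proof. But there are two concrete gaps in the execution. First, the family you actually construct, $d=4n^{2g_1}-m^2$ with the two parameters $(n,m)$, is too sparse to give the stated exponent: with $n\asymp X^{1/(2g_1)}$ and $m\le 2n^{g_1}\asymp X^{1/2}$ there are only $O(X^{1/2+1/(2g_1)+\epsilon})$ admissible pairs, hence at most that many values of $d$, and $\frac{1}{2g_1}<\frac{3}{2g_1+2}$ since $6g_1>2g_1+2$. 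There is no ``refinement of this construction'' in Soundararajan achieving $X^{1/2+3/(2g_1+2)-\epsilon}$ with your shape of family: his exponent for $g\equiv 2\pmod 4$ comes from a structurally different, \emph{three}-parameter family, namely solutions of $m^{g_1}=n^2+t^2d$ (odd exponent $g_1$, extra parameter $t$), where his Proposition 1 certifies an element of order $g_1$ and the factor $2$ in the order comes from genus theory --- not from an ideal $\mathfrak{a}$ with $\mathfrak{a}^{2g_1}$ principal. All of the quantitative content of the theorem lives in that setting: the first moment $S_1\asymp MN/T$, the second moment $S_2\ll T^2M^2X^{\epsilon}$, the Cauchy--Schwarz bound $N_{H_2}(X)\ge S_1^2/(S_1+S_2)$, and the optimization $T=X^{(g_1-2)/(4g_1+4)}$. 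Your proposal never sets this up, and the black box you invoke does not cover the family you built.

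Second, your congruence device does not transfer ``verbatim'' to the family that does work. The implication $P\mid(2n^{g_1}-m)\Rightarrow P\mid d$ uses that $d$ literally equals $4n^{2g_1}-m^2$; in the three-parameter family one only gets $P\mid m^{g_1}-n^2=t^2d$, so each $p_i$ could divide $t$ rather than $d$, and even when $p_i\mid d$ one must ensure $p_i^2\nmid t^2d$ or no squarefree $d$ survives the sieve. The paper's fix --- which is really the one new idea in its proof of Theorem~\ref{thm:11} --- is to impose congruences modulo $p_i^2$: choosing $(n_i,m_i)=(1+a_ip_i,1+b_ip_i)$ with $p_i\nmid 2a_i-g_1b_i$ makes $p_i$ divide $n_i^2-m_i^{g_1}$ exactly once, so the condition $(n,m)\equiv(n_i,m_i)\pmod{p_i^2}$ forces $p_i\|t^2d$, hence $p_i\nmid t$ and $p_i\|d$, compatibly with squarefreeness (the $(l+1)$-st prime is supplied by $(n,m)\equiv(2,1)\pmod{18}$, giving $3\mid d$). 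Your order computation for $[\mathfrak{a}]$ and the genus-theory step are sound, but without the three-parameter family and these exact-divisibility congruences the argument as proposed cannot reach the exponent $\frac{1}{2}+\frac{3}{2g_1+2}$.
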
	
\begin{proof} Take $l$ different odd primes $p_{1},..., p_{l}$ ($p_{i}>3$). For each $i$, we choose integers $a_{i}$ and $b_{i}$ such that $p_{i}\nmid 2a_{i}-g_{1}b_{i}$. Let $n_{i}=1+a_{i}p_{i}$ and $m_{i}=1+b_{i}p_{i}$. Then one can see that $p_{i}\mid n_{i}^{2}-m_{i}^{g_{1}}$ and $p_{i}^{2}\nmid n_{i}^{2}-m_{i}^{g_{1}}$. Let $n,\ m$ be solutions of the congruence equations
 \begin{equation} \label{eq:nm}
  \begin{cases}(n,m) \equiv (2,1) \mod 18,\\ (n,m)\equiv (n_{i},m_i) \mod p_{i}^{2}, i=1,...,l.
 \end{cases}  \end{equation}
By Chinese Remainder Theorem, $n$ and $m$ belong to two different congruence classes modulo $18\prod_{i=1}^{l}p_{i}^{2}$.	
	
Let  $T\leqslant \frac{X^{1/2}}{64}$ be a parameter to be chosen later. Set $M=\dfrac{T^{\frac{2}{g_1}}X^{\frac{1}{g_1}}}{2}$ and $N=\dfrac{TX^{\frac{1}{2}}}{2^{g_{1}+1}}$. 
For $d\leqslant X$, if $d$ is not squarefree, let $R(d)=0$; if $d$ is square-free, let $R(d)$ be the number of solutions $(m,n,t)$ of the equation $m^{g_{1}}=n^{2}+t^{2}d$, subject to \eqref{eq:nm} and the conditions
			\begin{equation*}
				t\nmid m, M<m\leqslant 2M, N<n \leqslant 2N, T<t \leqslant 2T.
			\end{equation*}
If $R(d) >0$, on the one hand, $\CL(-d)$ has an element of order $g_{1}$ by \cite[Proposition 1]{ref1}. On the other hand, we have $p_i\mid d$ for $1\leq i\leq l$ and $3|d$, hence $\CL(-d)$ has a subgroup isomorphic to $(\Z/2\Z)^{l}$ by Gauss's genus theory, thus $\CL(-d)$ contains $H_2=(\Z/2\Z)^l \times \Z/g_1\Z$ as a subgroup.

Let	$S_{1}=\sum_{d\leqslant X} R(d)$, $S_{2}=\sum_{d\leqslant X} R(d)(R(d)-1)$. By Cauchy's inequality, we have:
		    \begin{equation}\label{eq1}
		    N_{H_2}(X)\geqslant	\#~\{d\leqslant X:R(d)\neq 0 \}\geqslant \frac{S_{1}^{2}}{S_{1}+S_{2}}.
		    \end{equation}
Now changing the congruence conditions and following  Soundararajan's method of estimating $(1.4)$ in ~\cite{ref1}  we get
		    \begin{equation*}
		    	S_{1}\asymp \frac{MN}{T}  
		    \end{equation*}
By the same argument as the estimate of $(1.5)$ in ~\cite{ref1}, we get $S_{2}\ll T^{2}M^{2}X^{\epsilon}$.\\
Take $T=X^{\frac{g_{1}-2}{4g_{1}+4}}$ in \eqref{eq1} we get the desired bound.
\end{proof}
\begin{remark} Suppose $g_1=3$. We can apply Health-Brown's estimate in~\cite{ref3} for $S_{2}$ to get  $N_{H_2}(X) \gg X^{\frac{9}{10}-\epsilon}$ for any $\epsilon >0$.  We can also use a criterion of Honda~\cite[Proposition 10]{ref8} and combine the above construction to get $N^{+}_{H_2}(X):= \#\{0<d\leqslant X: H_2\hookrightarrow\ \CL(\Q(\sqrt{d}))\} \gg X^{\frac{9}{10}-\epsilon}$ for any $\epsilon >0$.
\end{remark}

\section{Bound for $N_{H_3}(X)$ }
The following proposition is an extension of ~\cite[Lemma 2.1]{ref5} and ~\cite[Proposition 1]{ref10}.
\begin{proposition}\label{prop:31}
Let $g\geqslant 3$ be an integer. For positive integers $a,b,n$, denote $ f_1(a,b)=\sum\limits_{i=0}^{g-1}a^{g-1-i}b^{i}$ and 
 \[  f(a,b,n)=2(a^{g}+b^{g})n^{g}-(a-b)^{2}n^{2g}-f_1(a,b)^{2}. \]
Let $f(a,b,n)= D$. Suppose $ab>1$. If $D\geqslant 4\max\{ba^{g-2}n^{g-1}, ab^{g-2}n^{g-1} \}$ and is square-free, then $\CL(-D)$ contains a subgroup isomorphic to $\Z/g\Z \times \Z/g\Z$. 
\end{proposition}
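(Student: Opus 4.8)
The plan is to convert the single polynomial identity defining $D$ into two simultaneous norm‑form representations, one attached to $an$ and one to $bn$, and then to extract two independent ideal classes of order $g$. First I would record the identity that drives everything: since $a^{g}-b^{g}=(a-b)f_1(a,b)$, a direct expansion gives
\[ 4(an)^{g}-D=\bigl((a-b)n^{g}+f_1(a,b)\bigr)^{2},\qquad 4(bn)^{g}-D=\bigl((a-b)n^{g}-f_1(a,b)\bigr)^{2}. \]
Writing $x_1=(a-b)n^{g}+f_1(a,b)$ and $x_2=(a-b)n^{g}-f_1(a,b)$ this reads $x_1^{2}+D=4(an)^{g}$ and $x_2^{2}+D=4(bn)^{g}$. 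Reducing the first relation modulo $4$ and using that $D$ is square‑free (so $4\nmid D$) forces $D\equiv 3\pmod 4$ and $x_1,x_2$ odd; hence $K=\Q(\sqrt{-D})$ has ring of integers $\Z[\tfrac{1+\sqrt{-D}}{2}]$ and $\beta_1=\tfrac{x_1+\sqrt{-D}}{2}$, $\beta_2=\tfrac{x_2+\sqrt{-D}}{2}$ are algebraic integers of norm $(an)^{g}$ and $(bn)^{g}$.

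Next I would set $\fa=(an,\beta_1)$ and $\fb=(bn,\beta_2)$ and show $\fa^{g}=(\beta_1)$, $\fb^{g}=(\beta_2)$, together with $\fa\,\overline{\fa}=(an)$ and $\fb\,\overline{\fb}=(bn)$. The one input needed is that $\fa$ and $\overline{\fa}$ are coprime (and likewise $\fb$), equivalently $\gcd(an,D)=1$: an odd prime $p\mid\gcd(an,D)$ would satisfy $p^{2}\mid 4(an)^{g}$ (as $g\ge 3$) and $p\mid x_1$, hence $p^{2}\mid D$, contradicting square‑freeness, while $2\nmid D$. This coprimality means every prime in $(\beta_1)$ is split with all of its contribution on a single factor, so its exponents are multiples of $g$ and $(\beta_1)=\fa^{g}$; the same shows the primes of $\fa,\fb$ are split and $\fa\neq\overline{\fa}$, $\fb\neq\overline{\fb}$, so that $[\fa]^{g}=[\fb]^{g}=1$ in $\CL(-D)$. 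To upgrade "divides $g$" to "equals $g$" I argue by size, following the order argument in \cite[Proposition 1]{ref10}: if $[\fa]$ had order a proper divisor $e$ of $g$, then $e\le g-2$, $\fa^{e}=(\gamma)$ is principal and $\beta_1=\pm\gamma^{g/e}$; since $\beta_1\notin\Z$ we get $\gamma\notin\Z$, whence $N(\gamma)=(an)^{e}\ge D/4$. But $(an)^{e}\le (an)^{g-2}<b\,a^{g-2}n^{g-1}\le D/4$, the boundary case $bn=1$ being excluded because $D$ is odd; this contradiction shows $[\fa]$, and likewise $[\fb]$, has order exactly $g$.

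The main obstacle is the independence step: I must rule out every relation $[\fa]^{u}[\fb]^{v}=1$ with $1\le u,v\le g-1$. Replacing $\fa$ by $\overline{\fa}$ and $\fb$ by $\overline{\fb}$ when convenient, such a relation produces a principal ideal $(\gamma)$ of norm $a^{u^{*}}b^{v^{*}}n^{u^{*}+v^{*}}$ with $1\le u^{*},v^{*}\le\lfloor g/2\rfloor$, whose generator is non‑real—here I use that $\fa,\fb,\overline{\fa},\overline{\fb}$ are pairwise non‑associate, which requires $a\neq b$—so $N(\gamma)\ge D/4\ge\max\{b a^{g-2}n^{g-1},a b^{g-2}n^{g-1}\}$. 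For $g$ odd one has $u^{*},v^{*}\le(g-1)/2$, and the estimate $a^{u^{*}}b^{v^{*}}n^{u^{*}+v^{*}}\le (ab)^{(g-1)/2}n^{g-1}\le\max\{b a^{g-2}n^{g-1},a b^{g-2}n^{g-1}\}$ (strict when $a\neq b$, since after arranging $a\ge b$ the ratio is $(b/a)^{(g-3)/2}\le 1$) contradicts the lower bound and forces $u=v=0$, giving $\Z/g\Z\times\Z/g\Z$. This is precisely why the hypothesis compares $D$ to the maximum of the two quantities and why the exponent $g-1$ appears. The genuinely delicate point is the balanced case $u^{*}=v^{*}$: for $g$ even the choice $u^{*}=v^{*}=g/2$ yields the norm $(ab)^{g/2}n^{g}$, which by the arithmetic–geometric mean inequality is always $\ge D/4$, so the pure size argument degenerates; thus the odd case—the one relevant to Theorem~\ref{thm:12}—is the clean one, and I would treat $a=b$ separately, where $\fb=\overline{\fa}$ and only $\Z/g\Z$ is obtained.
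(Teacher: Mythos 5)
Your setup coincides with the paper's: the identities $4(an)^g-D=\bigl(f_1(a,b)+(a-b)n^g\bigr)^2$ and $4(bn)^g-D=\bigl(f_1(a,b)-(a-b)n^g\bigr)^2$, the ideals $\fa,\fb$ with $\fa^g,\fb^g$ principal, and the proof that $[\fa],[\fb]$ have exact order $g$ via the norm lower bound $(1+D)/4$ are all exactly the paper's steps (its $X_1,X_2,Y_1,Y_2$ and step (1)). The genuine gap is in your independence step, and it sits precisely where the paper does its real work. From a relation $[\fa]^u[\fb]^v=1$ you extract a principal ideal $\fa^{u^*}\fb^{v^*}$ (conjugates allowed) and assert its generator $\gamma$ is non-real because ``$\fa,\fb,\bar{\fa},\bar{\fb}$ are pairwise non-associate, which requires $a\neq b$''. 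That inference is a non sequitur. Since $N(\gamma)$ is coprime to $D$, having $\gamma\in\Z$ is equivalent to $\fa^{u^*}\fb^{v^*}=\bar{\fa}^{u^*}\bar{\fb}^{v^*}$; because $\fa$ is coprime to $\bar{\fa}$ and $\fb$ to $\bar{\fb}$, this only forces the primes of $\fa$ to occur in $\bar{\fb}$ with matched weights ($u^*v_{\mathfrak{p}}(\fa)=v^*v_{\mathfrak{p}}(\bar{\fb})$), which in no way requires two of the four ideals to coincide. Such shared primes are entirely possible: $N(\fa)=an$ and $N(\bar{\fb})=bn$, and the proposition imposes no coprimality among $a,b,n$ (nor could it, given how it is applied in Theorem~\ref{thm:12}, where $x,y,z$ range over boxes). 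So the real-generator case must be confronted, not dismissed. The paper's proof exists largely to handle it: it writes $(\fa_1^r\fb^{s_1})^{g/r}=\bigl(\frac{X_1+\sqrt{-D}}{2}\bigr)\bigl(\frac{X_2\pm\sqrt{-D}}{2}\bigr)^t$, computes the coefficient of $\sqrt{-D}$ by binomial expansion, and shows that if it vanishes one gets the divisibility $X_2\mid X_1$; only by running the symmetric argument as well (giving $X_1\mid X_2$, hence $|X_1|=|X_2|$, hence $a=b$, contradicting squarefreeness of $D$) is this case eliminated. That is also why the paper proves the weaker, asymmetric claim that either all $[\fa_1\fa_2^k]$ or all $[\fa_2\fa_1^k]$ have order $g$ --- which still generates $(\Z/g\Z)^2$ --- rather than ruling out every relation head-on as you attempt.

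Two further points. First, your size estimate closes only for odd $g$, as you concede, but the proposition is asserted for every $g\geq 3$; the paper's bookkeeping is different --- after raising to the power $g/r$ the first exponent is the order $r$ of the class, which divides $g$ and is thus at most $g/2$, while the second is reduced to $s_1<g/2$ --- so the balanced case you cannot handle essentially does not arise there. Second, there is nothing to ``treat separately'' when $a=b$: then $a=b>1$ and $a^2\mid f(a,a,n)=4a^gn^g-g^2a^{2g-2}$, so $D$ is not squarefree; the hypotheses already exclude this case, exactly as the paper observes at the end of its proof, and your own argument in fact needs this observation to license the assumption $a\neq b$.
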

\begin{proof}Let $X_{1}=f_1(a,b)+(a-b)n^{g}$, $Y_{1}=an$, $X_{2}=f_1(a,b)-(a-b)n^{g}$, $Y_{2}=bn$. We have
		\begin{equation*}
			X_{1}^{2}-4Y_{1}^{g}=X_{2}^{2}-4Y_{2}^{g}=-f(a,b,n)=-D. 
		\end{equation*}
Thus $(\frac{X_{i}+\sqrt{-D}}{2})(\frac{X_{i}-\sqrt{-D}}{2})=Y_{i}^{g}$, which implies that
$X_{i}$ and $Y_{i}^{g}$ are elements in the ideal $ (\frac{X_{i}+\sqrt{-D}}{2},\frac{X_{i}-\sqrt{-D}}{2})$.
Since $D$ is squarefree,  we have $(X_{i},Y_{i})=(X_{j},Y_{j})=1$, and the ideals $(\frac{X_{i}+\sqrt{-D}}{2})$ and $(\frac{X_{i}-\sqrt{-D}}{2})$ are coprime. Hence we can write $(\frac{X_{i}+\sqrt{-D}}{2})=\fa_{i}^{g}$ and $\frac{X_{i}-\sqrt{-D}}{2}=\bar{\fa}_{i}^{g}$ for some integral ideals $\fa_{i}, i=1,2$. We show that $[\fa_{i}]\ (i-1,2)$ , and either  $[\fa_{1}\fa_{2}^{k}]\ (1\leq k\leq g-1)$ or $[\fa_{2}\fa_{1}^{k}]\ (1\leq k\leq g-1)$ are all elements of order $g$ in $\CL(-D)$, consequently  $\langle [\fa_1],[\fa_2]\rangle$ is a subgroup of $\CL(-D)$ isomorphic to $\Z/g\Z \times \Z/g\Z$.

\medskip \noindent
(1) For $i=1,2$, we show that $[\fa_{i}]$ is an element of order $g$. If not, then $\fa_{i}$ is an element of order $r<g$. Write $\fa_{i}^{r}=(\frac{\alpha+\beta\sqrt{D}}{2})$. Note that $\beta \neq 0$,  we have
	\begin{equation*}
				 Y_{i}^{g}=N(\fa_{i}^{g})=(N(\fa_{i}^{r}))^{\frac{g}{r}}=\left(\frac{\alpha^{2}+D\beta^{2}}{4}\right)^{\frac{g}{r}}\geqslant \left(\frac{1+D^{2}}{4}\right)^{3}>Y_{i}^{g},
	\end{equation*} 
which is a contradiction.
			
\medskip	\noindent		
(2)  We claim that  at least one of the following two conclusions is true:
\begin{enumerate}[i]
	\item $[\fa_{1}\fa_{2}^{k}]\ (1\leq k\leq g-1)$ are elements of order $g$ in $\CL(-D)$.
	\item $[\fa_{2}\fa_{1}^{k}]\ (1\leq k\leq g-1)$ are elements of order $g$ in $\CL(-D)$.
\end{enumerate} 

Assume both conclusions are false. 

(2-1) Since (i) is not true, there exists some $s\leq g-1$ such that $[\fa_{1}\fa_{2}^{s}]$  is an element of order $r<g$. Hence  $\fa_{1}^{r}\fa_{2}^{sr}$ is principal. Write $sr=kg+s_{1}, 0\leqslant s_{1}<g$, thus $\fa_{1}^{r}\fa_{2}^{s_{1}}$ is principal. Replacing $\fa_2^{s_{1}}$ by $\bar{\fa}_{2}^{g-s_{1}}$ if necessary, we may assume  $s_{1}<\dfrac{g}{2}$, thus we have $\fa_{1}^{r}\fb^{s_{1}}$ is principal for $\fb=$ $\fa_{2}$ or $\bar{\fa}_{2}$. Denote $\fa_{1}^{r}\fb^{s_{1}}=(\frac{\alpha+\beta\sqrt{D}}{2})$ for some integers $\alpha,\beta$ of same parity. Note that $r\mid s_{1}$, denote $s_{1}=tr$, we have
 \[ \left(\fa_{1}^{r}\fb^{s_{1}}\right)^{\frac{g}{r}}=
 \left(\frac{X_{1}+\sqrt{-D}}{2}\right)\left(\frac{X_{2}\pm\sqrt{-D}}{2}\right)^{t}.\] 

If $t$ is even, say $t=2t_{1}$, denoted $(\frac{X_{1}+\sqrt{-D}}{2})(\frac{X_{2}\pm\sqrt{-D}}{2})^{t}=(\frac{A+B\sqrt{-D}}{2})$, thus we have
	\begin{equation*}
		2^{t}B=\sum_{i=0}^{t_{1}} \binom{2t_{1}}{2i}X_{2}^{2t_{1}-2i}(-D)^{i}\pm X_{1}\sum_{i=0}^{t_{1}-1}\binom{2t_{1}}{2i+1}X_{2}^{2t_{1}-2i-1}(-D)^{i}.
	\end{equation*}
	If $B=0$, then $X_{2}\mid D^{t_{1}}$, contradiction to $(X_{2},D)=1$. Hence $B\neq 0$, which implies $\beta \neq 0$.
	Note that $r+s_{1}\leqslant g-1$, we get 
	\begin{equation}\label{eq:3}
		Y_{1}^{r}Y_{2}^{s_{1}}=N(\left(\fa_{1}^{r}\fb^{s_{1}}\right))\geqslant \left(\frac{1+D}{4}\right)>\max\{Y_{1}^{g-2}Y_{2}, Y_{2}^{g-2}Y_{1} \}>Y_{1}^{r}Y_{2}^{s_{1}}
	\end{equation}  
	which is a contradiction.
	
Now  $t$ must be odd, say $t=2{t_{2}+1}$, denote $(\frac{X_{1}+\sqrt{-D}}{2})(\frac{X_{2}\pm\sqrt{-D}}{2})^{t}=(\frac{A_{1}+B_{1}\sqrt{-D}}{2})$, thus we have
\begin{equation*}
	2^{t}B_{1}= \sum_{i=0}^{t_{2}}\binom{2t_{2}+1}{2i}(-D)^{i}X_{2}^{2t_{2}+1-2i}\pm X_{1}\sum_{i=0}^{t_{2}}\binom{2t_{2}+1}{2i+1}(-D)^{i}X_{2}^{2t_{2}-2i}.
\end{equation*}
If $B_{1}\neq 0$, then $\beta\neq 0$, we still get a contradiction by \eqref{eq:3}. Thus we have $B_{1}=0$, which implies $X_{2}\mid X_{1}D^{t_{2}}$ and then $X_{2}\mid X_{1}$ as $(X_{2},D)=1$.

(2-2) Since (ii) is also not true, by a symmetric argument we get $X_{1}\mid X_{2}$.
 
By (2-1) and (2-2)  we have $|X_{1}|=|X_{2}|$, which implies $a=b$. Since $D$ is square-free and $ab\neq 1$, we have $a\neq b$, which is a contradiction.
\end{proof} 
	
\begin{lemma}\label{lemma:32} Let $f_{1}(x,y)=(\sum\limits_{i=0}^{g-1} x^{g-1-i}y^i)^{2}$ and
 \[f(x,y,z)=2(x^{g}+y^{g})z^{g}-(x-y)^{2}z^{2g}-(f_{1}(x,y))^{2} \]  
as given in Proposition~\ref{prop:31}. Then $f(x,y,z)$ is square-free  in $\Z[x,y,z]$.
\end{lemma}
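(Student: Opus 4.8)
The plan is to treat $f$ as a polynomial in the single variable $z$ over the rational function field $\Q(x,y)$, show it has distinct roots (hence no repeated factor) there, and then separately rule out square factors lying in $\Z[x,y]$. Throughout write $S(x,y)=\sum_{i=0}^{g-1}x^{g-1-i}y^i$, so that $(x-y)S=x^g-y^g$ and
\[ f=-(x-y)^2z^{2g}+2(x^g+y^g)z^g-S^2 .\]

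The first key step is to regard $f$ as a quadratic in the auxiliary variable $w=z^g$, namely $f=-(x-y)^2w^2+2(x^g+y^g)w-S^2$, and compute its discriminant. Using $(x-y)^2S^2=(x^g-y^g)^2$ one finds
\[ \bigl(2(x^g+y^g)\bigr)^2-4\,(x-y)^2S^2=4\bigl((x^g+y^g)^2-(x^g-y^g)^2\bigr)=16\,x^gy^g, \]
so the two $w$-roots are $w_{\pm}=\dfrac{(x^g+y^g)\pm 2\sqrt{x^gy^g}}{(x-y)^2}$. Since $w_+w_-=S^2/(x-y)^2\neq 0$ and $w_+-w_-\neq 0$ in $\overline{\Q(x,y)}$ (both following from $x^gy^g\neq 0$ and $S\neq 0$), the two roots are distinct and nonzero.

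The second step is to deduce separability of $f$ in $z$. The $2g$ roots of $f$ in $\overline{\Q(x,y)}$ are exactly the $g$-th roots of $w_+$ together with the $g$-th roots of $w_-$; these are pairwise distinct because $w_+,w_-\neq 0$ (so the roots within each group are distinct) and $w_+\neq w_-$ (so no root is shared between the two groups). Hence $f$ is square-free in $\Q(x,y)[z]$. Consequently, if $f=q^2h$ in $\Z[x,y,z]$ with $q$ non-constant, then $q$ cannot have positive degree in $z$, since otherwise $q^2\mid f$ in $\Q(x,y)[z]$ would contradict square-freeness there; thus necessarily $q\in\Z[x,y]$.

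The final step disposes of a putative square factor $q\in\Z[x,y]$. Such a $q^2$ must divide every $z$-coefficient of $f$, in particular the leading coefficient $-(x-y)^2$, which (as $x-y$ is prime and $q$ is non-constant) forces $q$ to be an associate of $x-y$; but then $(x-y)^2$ would have to divide the coefficient $2(x^g+y^g)$ of $z^g$, which fails because $x^g+y^g$ does not even vanish at $x=y$. This contradiction shows $f$ has no non-constant square factor, and since $f$ is primitive (e.g.\ its $x^{2g}z^{2g}$-coefficient is $-1$) it has no constant square factor either, so $f$ is square-free in $\Z[x,y,z]$. I expect the only genuine computation to be the discriminant evaluation; the conceptual crux is the clean split between $z$-involving square factors (excluded by separability) and $z$-free ones (excluded by inspecting the leading coefficient), each of which is short.
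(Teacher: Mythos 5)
Your proof is correct, but it takes a genuinely different route from the paper's. The paper works directly with a hypothetical factorization $f=hk^{2}$ in $\Z[x,y,z]$: from $k\mid \frac{\partial f}{\partial z}$ and evaluation at $z=0$ it deduces $k(x,y,0)=\pm 1$ (using that $\sum_{i}x^{g-1-i}y^{i}$ is coprime to $2g(x^{g}+y^{g})$), a total-degree count then forces $\deg k\leq 2$, and comparing $x,y$-degrees plus the specialization $x=y$ (where $f$ becomes $y^{g}(4z^{g}-g^{2}y^{g-2})$) finally forces $k=\pm 1$. You instead prove separability in $z$ over $\Q(x,y)$ by exploiting the quadratic-in-$z^{g}$ structure: the discriminant $16x^{g}y^{g}$ is nonzero, so the $2g$ roots are pairwise distinct, whence any square factor of $f$ must lie in $\Z[x,y]$; such a factor is excluded by the $z$-leading coefficient $-(x-y)^{2}$ together with the $z^{g}$-coefficient $2(x^{g}+y^{g})$, and constant square factors are excluded by primitivity. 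Your argument is more conceptual — it isolates exactly where square-freeness comes from (the nonvanishing discriminant) and needs no degree bookkeeping — while the paper's is more elementary, staying inside $\Z[x,y,z]$ and avoiding the algebraic closure of $\Q(x,y)$. Two cosmetic points: the monomial of $f$ with coefficient $-1$ that witnesses primitivity is $x^{2}z^{2g}$, not $x^{2g}z^{2g}$ (the latter does not occur in $f$); and, like the paper's own proof, you read $f$ as $-(x-y)^{2}z^{2g}+2(x^{g}+y^{g})z^{g}-\bigl(\sum_{i}x^{g-1-i}y^{i}\bigr)^{2}$, i.e.\ as in Proposition~\ref{prop:31}, which is indeed the intended reading despite the statement of the lemma squaring $f_{1}$ a second time.
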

\begin{proof}
If there exist $h(x,y,z), k(x,y,z) \in \Z[x,y,z]$ such that  
		\begin{equation}\label{eq:4}
			f(x,y,z)=h(x,y,z)k(x,y,z)^{2},
		\end{equation}
then
		\begin{equation}\label{eq:5}
			k(x,y,z)\mid \frac{\partial}{\partial z}f(x,y,z).
		\end{equation}
		
Let $h_1(x,y)=h(x,y,0)$, $k_1(x,y)=k(x,y,0)$. We have  $k_{1}(x,y)\mid f_{1}(x,y)^{2}$ by \eqref{eq:4} and $k_{1}(x,y)\mid 2g(x^{g}+y^{g})$ by \eqref{eq:5}. Then $k_{1}(x,y)=\pm 1$ since $f_{1}(x,y)$ is coprime with $2g(x^{g}+y^{g})$. Thus we have $h_{1}(x,y)=\pm (f_{1}(x,y))^{2}$. Considering the total degree in \eqref{eq:4}, note that $f(x,y,z)$ is a polynomial of  total degree $2g+2$ and $\deg h \geqslant \deg h_{1} =2\deg f_{1}= 2g-2$, we have $\deg k \leqslant 2$. Denote $k(x,y,z)=az^{2}+h(x,y)z\pm 1$ for $a\in \Z$, where $h(x,y)$ is an integral polynomial of total degree $\leq 1$. Compare the degrees of $x$ and $y$ in \eqref{eq:4}, we have $h(x,y)\in \Z$, and hence $k(x,y,z)\in \Z[z]$. Take $x=y$ in \eqref{eq:4}, we have $k(x,y,z)^{2}\mid 4z^{g}-g^{2}y^{g-2}$, thus $k(x,y,z)=\pm 1$.
\end{proof}	 

Now we recall the conjecture for squarefree density of integral multivariate polynomials (see ~\cite{ref9,ref7}  for more details). Suppose $P$ is a polynomial in $\Z[X_{1}, X_{2}, \cdots, X_{n}]$  of total degree $d\geqslant 2$. For any integer $m>1$, let
	\begin{equation*}
		\rho_{P}(m)=\# \{X\in (\Z/m\Z)^{n}: P(X)\equiv 0 \mod m\}.
	\end{equation*}
Given $B_{j}\in \R$, $B_{j}\geqslant 1\ (j=1,...,n)$ and $h\in \Z$, define
	\begin{equation*}
		B= \prod_{j=1}^n [0,B_{j}]\cap\Z^n,\quad  r_{P}(h)=\# \{X\in B\mid P(X)=h \},
	\end{equation*}
	\begin{equation*}
		N_{P}(B)=\sum_{h\in\Z,\ h\neq 0}\mu(|h|)^{2}r_{P}(h). 
	\end{equation*}
Note that $N_P(B)$ is	 the number of $X\in B$ such that $P(X)$ takes square-free value. 

\begin{Conjecture}\label{conj:33}
$N_{P}(B)\sim \mathscr{C}_{P}B_{1}...B_{n}$ as $ \min\limits_{j=1,\cdots, n}B_{j}\rightarrow \infty$, 
where 
	\begin{equation*}
	 \mathscr{C}_{P}=\prod_{p}\left(1-\frac{\rho_{P}(p^{2})}{p^{2n}}\right). 
	 \end{equation*}
\end{Conjecture}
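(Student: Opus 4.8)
The plan is to prove the asymptotic by a square-free sieve. Starting from the identity $\mu(|h|)^{2}=\sum_{e^{2}\mid h}\mu(e)$, applied to $h=P(X)$ and summed over $X\in B$, one interchanges the order of summation to obtain
\[ N_{P}(B)=\sum_{e\geq 1}\mu(e)\,S_{e}(B),\qquad S_{e}(B)=\#\{X\in B:\ e^{2}\mid P(X),\ P(X)\neq 0\}. \]
The whole problem is thus to understand $S_{e}(B)$ with enough uniformity in $e$. First I would fix a slowly growing cutoff $z=z(B)$ and split the sum into the main range $e\le z$ and the tail $e>z$, the locus $P(X)=0$ being confined to a lower-dimensional subvariety and hence negligible.

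For the main range $e\le z$, the congruence $e^{2}\mid P(X)$ picks out $\rho_{P}(e^{2})$ residue classes in $(\Z/e^{2}\Z)^{n}$, so counting the lattice points of the box $B$ lying in each class gives
\[ S_{e}(B)=\frac{\rho_{P}(e^{2})}{e^{2n}}\,B_{1}\cdots B_{n}+O\!\left(\frac{\rho_{P}(e^{2})}{e^{2(n-1)}}\cdot\frac{B_{1}\cdots B_{n}}{\min_{j}B_{j}}\right), \]
where the error records the incomplete boxes near the boundary of $B$. Because $\rho_{P}$ is multiplicative on coprime moduli (by the Chinese Remainder Theorem) and only square-free $e$ survive, the partial sums $\sum_{e\le z}\mu(e)\rho_{P}(e^{2})e^{-2n}$ converge as $z\to\infty$ to the Euler product $\mathscr{C}_{P}=\prod_{p}\bigl(1-\rho_{P}(p^{2})p^{-2n}\bigr)$. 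Its absolute convergence rests on the bound $\rho_{P}(p^{2})=O(p^{2n-2})$, which holds precisely because $P$ is square-free as a polynomial, so that $P=0$ is a reduced hypersurface; for the polynomial $f$ of interest this square-freeness is exactly Lemma~\ref{lemma:32}. Multiplying through by $B_{1}\cdots B_{n}$ and choosing $z$ a small power of $\min_{j}B_{j}$ then produces the predicted main term $\mathscr{C}_{P}B_{1}\cdots B_{n}$ with the accumulated errors of strictly smaller order.

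The genuine difficulty --- and the step I expect to be the main obstacle --- is the tail $\sum_{e>z}\mu(e)S_{e}(B)$, which is exactly what keeps the statement a conjecture rather than a theorem. I would break it once more. In the medium range $z<e\le(B_{1}\cdots B_{n})^{\delta}$ one bounds $S_{e}(B)$ by counting points of $B$ on the reduction of $P=0$ modulo $e^{2}$, where uniform Lang--Weil or dimension-growth estimates apply and the extra averaging afforded by $n=3$ variables keeps the contribution $o(B_{1}\cdots B_{n})$. The obstruction is the large range, where $e^{2}$ is comparable to $\max_{X\in B}|P(X)|$: here one must control how often a value $P(X)$ is divisible by a square exceeding any fixed power of the box size, and no elementary point count suffices. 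Following the multivariate square-free-value method of~\cite{ref9,ref7}, this last contribution can be shown to be negligible under the $abc$-conjecture, but it is precisely this large-square tail that obstructs an unconditional argument for polynomials of high degree such as our $f$ of total degree $2g+2$, which is why the density is recorded here as a conjecture.
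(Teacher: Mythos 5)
The statement you were asked to prove is a \emph{conjecture}, not a theorem: the paper does not prove Conjecture~\ref{conj:33} and does not claim to. It is the multivariate square-free density conjecture recorded from \cite{ref9} and \cite{ref7}, and the paper only \emph{assumes} it in order to deduce Theorem~\ref{thm:12}. So there is no proof in the paper to compare yours against, and the first thing to say about your submission is that it correctly should not end in a Q.E.D.

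On its own terms, your sketch is a faithful account of the standard square-free sieve: the identity $\mu(|h|)^{2}=\sum_{e^{2}\mid h}\mu(e)$, the extraction of the main term $\mathscr{C}_{P}B_{1}\cdots B_{n}$ from the moduli $e\leq z$ (using multiplicativity of $\rho_{P}$ via the Chinese Remainder Theorem and the bound $\rho_{P}(p^{2})=O(p^{2n-2})$ valid for square-free $P$), and the reduction of the whole problem to the tail $e>z$. You also correctly identify that the range where $e^{2}$ is comparable to $\max_{X\in B}|P(X)|$ is the genuine obstruction, and that it is known to be controllable under the $abc$-conjecture (Poonen) but not unconditionally for a polynomial of degree $2g+2$. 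But that identification is precisely an admission that the argument is incomplete: the conjecture is \emph{equivalent} to the assertion that this tail is $o(B_{1}\cdots B_{n})$, and nothing in your proposal addresses it beyond naming it. What you have proved is the classical truncated-sieve main term, which is not in dispute; the gap is not a technicality to be patched but the entire open content of the statement. Two smaller points: your appeal to ``the extra averaging afforded by $n=3$ variables'' conflates the general conjecture (arbitrary $n$) with the paper's application (where $n=3$ for the polynomial $f$ of Lemma~\ref{lemma:32}); and even in the medium range, uniform Lang--Weil bounds control points modulo $e$ for square-free $e$, whereas you need points modulo $e^{2}$, where such uniformity is subtler and is one of the places the known unconditional arguments lose.
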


\begin{theorem}\label{thm:12}
Assume Conjecture~\ref{conj:33} holds. Then for $g\geq 5$, 
	\begin{equation*}
		N_{H_3}(X)= \#\{ d\leq X:\ \exists\ (\Z/g\Z)^2  \hookrightarrow \CL(-d)\}\gg X^{\frac{1}{g-1}-\epsilon} 
	\end{equation*}
	for large $X$ and any $\epsilon >0$.
\end{theorem}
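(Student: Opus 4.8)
The plan is to realize the square-free values of the polynomial $f$ from Proposition~\ref{prop:31} as the relevant discriminants and to count them via Lemma~\ref{lemma:32} together with Conjecture~\ref{conj:33}. Concretely, I would work on a box $B=\{(a,b,n):A<a,b\le(1+\eta)A,\ n_0<n\le 2n_0\}$, with $\eta>0$ a small constant and $A,n_0$ chosen below, arranged so that on $B$ the quantity $f(a,b,n)$ is a positive integer $\le X$ and the hypotheses $ab>1$, $a\ne b$, and $D=f(a,b,n)\ge 4\max\{ba^{g-2}n^{g-1},ab^{g-2}n^{g-1}\}$ of Proposition~\ref{prop:31} all hold. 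Every square-free value of $f$ on $B$ is then a square-free $d\le X$ with $(\Z/g\Z)^2\hookrightarrow\CL(-d)$, so the task reduces to counting distinct square-free values of $f$ on $B$.

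For the scaling I would balance the three terms of $f(a,b,n)=2(a^g+b^g)n^g-(a-b)^2n^{2g}-f_1(a,b)^2$. On $B$ the positive term is $\asymp A^gn_0^g$, while $f_1(a,b)^2\asymp A^{2g-2}$ and, for $|a-b|\le\eta A$, one has $(a-b)^2n^{2g}\le\eta^2A^2n_0^{2g}$. Forcing the positive term to dominate the other two (by a suitable choice of $\eta$ and of the constant in front of $A$) requires $n_0\asymp A^{(g-2)/g}$; then $D\asymp A^{2g-2}$, so $D\le X$ holds once $A\asymp X^{1/(2g-2)}$. With this choice the $(a,b)$-part of $B$ already carries $\asymp A^2=X^{1/(g-1)}$ lattice points, the target order of magnitude, and since $D\gg A^{2(g-1)^2/g}\gg\max\{ba^{g-2}n^{g-1},ab^{g-2}n^{g-1}\}$ the size hypothesis of Proposition~\ref{prop:31} is automatic for large $A$.

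Because $f$ is square-free in $\Z[x,y,z]$ by Lemma~\ref{lemma:32}, Conjecture~\ref{conj:33} applied on $B$ gives $S_1:=\#\{(a,b,n)\in B: f(a,b,n)\ \text{square-free and admissible}\}\gg\#B\asymp A^2n_0$. To pass from triples to \emph{distinct} values I would use the Cauchy--Schwarz inequality exactly as in the proof of Theorem~\ref{thm:11}: writing $R(d)$ for the number of admissible triples with $f(a,b,n)=d$ and $S_2=\sum_d R(d)(R(d)-1)$, we have $N_{H_3}(X)\ge\#\{d:R(d)>0\}\ge S_1^2/(S_1+S_2)$.

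The main obstacle is the collision sum $S_2$, i.e. a uniform bound on $R(d)$. Here I would exploit the superelliptic structure behind Proposition~\ref{prop:31}: if $f(a,b,n)=d$, then $(an,\ f_1(a,b)+(a-b)n^g)=(Y_1,X_1)$ is an integral point on $4n^gx^g-y^2=d$, and for fixed $n$ and $(a,X_1)$ the value $b$ is pinned to $O(g)$ possibilities by the degree-$(g-1)$ equation $X_1=f_1(a,b)+(a-b)n^g$. Thus $R(d)$ is governed by the number of integral points with $x\asymp A$ on the high-genus curves $y^2=4n^gx^g-d$ (genus $\ge 2$ for $g\ge 5$), summed over $n_0<n\le 2n_0$. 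Bounding these by $X^\epsilon$ uniformly in $d$ and $n$ --- via the determinant method, or a gap principle for integral points on superelliptic curves --- yields $R(d)\ll n_0X^\epsilon$, whence $S_2\ll n_0X^\epsilon S_1$ and $N_{H_3}(X)\gg S_1^2/S_2\gg A^2X^{-\epsilon}=X^{1/(g-1)-\epsilon}$. The two points I expect to require real care are this uniform lattice-point estimate on the fibers and checking that Conjecture~\ref{conj:33} may legitimately be applied on the shifted box $B$ (equivalently, that the density constant $\mathscr{C}_P$ is positive and the relevant specializations of $f$ stay square-free); the square-free density itself is granted by the conjecture.
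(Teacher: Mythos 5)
Your construction coincides with the paper's: same polynomial $f$, same box scaling ($a,b\asymp X^{1/(2g-2)}$ pinched into a short interval to keep $f>0$, $n\asymp X^{(g-2)/(2g(g-1))}$), same use of Conjecture~\ref{conj:33} for the main term (the paper applies it to corner boxes and combines by inclusion--exclusion, with $\mathscr{C}_f>0$ coming from Lemma~\ref{lemma:32} plus the Vaughan--Zarhin theorem, exactly the two points you flag), and the same Cauchy--Schwarz reduction. The genuine gap is your collision bound. You reduce $R(d)$ to counting integral points with $x\asymp A$ on the superelliptic curves $y^2=4n^gx^g-d$ and assert these are $\ll X^{\epsilon}$ uniformly in $n$ and $d$, ``via the determinant method, or a gap principle''. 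Neither tool gives this. The determinant method (Bombieri--Pila) for an irreducible affine curve of degree $g$ in a box of side $N$ gives $\ll N^{1/g+\epsilon}$ points; here the $y$-coordinate ranges over an interval of length $\asymp X^{1/2}$, so the bound is $\ll X^{1/(2g)+\epsilon}$ per curve, and feeding that through your own chain of inequalities yields only $N_{H_3}(X)\gg X^{\frac{1}{g-1}-\frac{1}{2g}-\epsilon}$, short of the target exponent. A gap principle also fails, because on your box $d\asymp 4n^ga^g\asymp X$ (the constant term is of full size): two solutions $a_1<a_2$ force the corresponding $X_1$-values apart by only $\gg n^gA^{g-1}/X^{1/2}\asymp A^{g-2}$ inside an interval of length $\asymp A^{g-1}$, which recovers nothing beyond the trivial bound $O(A)$ per fiber. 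A uniform $X^{\epsilon}$ bound for integral points on such curves is, in effect, a uniform-boundedness conjecture; it cannot be cited.

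The paper avoids pointwise bounds on $r(D)$ entirely and instead bounds $S_2=\sum_D\mu(|D|)^2r(D)^2$ on average, where the factorization structure of a \emph{pair} of representations can be exploited. If $f(x_1,y_1,z_1)=f(x_2,y_2,z_2)=D$, then with $X_i=f_1(x_i,y_i)+(x_i-y_i)z_i^g$ one has $4z_1^gx_1^g-4z_2^gx_2^g=X_1^2-X_2^2$. There are $\ll X^{2/g}$ choices of $(x_1,z_1,x_2,z_2)$; for each choice with $z_1^gx_1^g\neq z_2^gx_2^g$ the left-hand side is a fixed nonzero integer, so the divisor bound applied to $(X_1-X_2)(X_1+X_2)$ leaves $\ll X^{\epsilon}$ pairs $(X_1,X_2)$, each pinning down $(y_1,y_2)$ up to $O(g^2)$ choices; the degenerate case $z_1x_1=z_2x_2$ is handled by the divisor bound as well and contributes $\ll X^{\frac{3g-2}{2g(g-1)}+\epsilon}$. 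Hence $S_2\ll X^{2/g+\epsilon}$ unconditionally, and $S_1^2/S_2\gg X^{\frac{3g-2}{g(g-1)}-\frac{2}{g}-\epsilon}=X^{\frac{1}{g-1}-\epsilon}$. The moral is that the divisor trick requires two representations: you should bound the $\ell^2$ quantity $S_2$ directly rather than the $\ell^\infty$ quantity $\max_d R(d)$; with that single change your argument becomes the paper's proof.
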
	
\begin{proof}
For a given large $X$, let
 \begin{equation*}
 R =\left(\Bigl(\frac{1}{2^{4}g^{2}}X^{\frac{1}{2(g-1)}},\frac{2^{2g-4}g^{g-2}+1}{2^{2g}g^{g}}X^{\frac{1}{2(g-1)}}\Bigr)\cap \Z\right)^{2}\times \left(\Bigl(\frac{1}{2}X^{\frac{g-2}{2g(g-1)}},X^{\frac{g-2}{2g(g-1)}}\Bigr)\cap \Z\right). 
 \end{equation*} 
For $(x,y,z)\in R$, let $f(x,y,z)$ be the polynomial defined in Lemma~\ref{lemma:32}. we have $f(x,y,z)>0$ and $f(x,y,z)\asymp X$. Let 
	 \begin{equation*}
	 		r(D):=\#\{(x,y,z)\in R:\ D=f(x,y,z)\}.
	 \end{equation*}
By repeatedly using Conjecture~\ref{conj:33} and the inclusion-exclusion principle we get
	\begin{equation*}
	 		S_{1}:=\sum_{D}\mu(|D|)^{2}r(D)\sim c\mathscr{C}_{f}X^{\frac{3g-2}{2g(g-1)}}
	\end{equation*} 
for some constant $c>0$. By Lemma~\ref{lemma:32} and Theorem 1.1 in ~\cite{ref7} , we get $\mathscr{C}_{f}>0$, thus $S_{1}\asymp X^{\frac{3g-2}{2g(g-1)}}$.
We will see 
	 \begin{equation}\label{eq:6}
	 		S_{2}=\sum_{D}\mu(|D|)^{2}r(D)^{2}\ll X^{\frac{2}{g}+\epsilon}.
	 \end{equation}
Then by Proposition~\ref{prop:31} and Cauchy's inequality, we have:
	 	\begin{equation*}
	 		N_{H_3}(X)\geqslant \sum_{{\tiny \begin{aligned} &D\leqslant X \\ r&(D)>0	\end{aligned}}}\mu(D)^{2}\geqslant \frac{S_{1}^{2}}{S_{2}}\gg X^{\frac{1}{g-1}-\epsilon}.
	 	\end{equation*}
To show \eqref{eq:6}, note that $S_{2}$ is bounded by the number of solutions to the equation
 \begin{equation*}
 4z_{1}^{g}x_{1}^{g}-4z_{2}^{g}x_{2}^{g}=\big(\sum_{i=0}^{g-1}x_{1}^{g-1-i}y_{1}^{i}+(x_{1}-y_{1})z_{1}^{g}\big)^{2}-\big(\sum_{i=0}^{g-1}x_{2}^{g-1-i}y_{2}^{i}+(x_{2}-y_{2})z_{2}^{g}\big)^{2}
 \end{equation*}
subject to $(x_{i},y_{i},z_{i})\in R$ for $i=1,2$.

Fix $z_{1}, x_{1}$. If $z_{1}^{g}x_{1}^{g}=z_{2}^{g}x_{2}^{g}$, then there are $O(z_{1}x_{1})=O(X^{\epsilon})$ choices of $x_{2},z_{2}$, and  for each choice of $y_{1}$, there are at most $2g-2$ choices of $y_{2}$, thus there are at most $O(X^{\frac{3g-2}{2g(g-1)}+\epsilon})$ solutions. Now for any choice of $x_{1},z_{1},x_{2},z_{2}$ such that $z_{1}^{g}x_{1}^{g}\neq z_{2}^{g}x_{2}^{g}$, there are at most $O(|z_{1}^{g}x_{1}^{g}- z_{2}^{g}x_{2}^{g}|)=O(X^{\epsilon})$ choices of integers $s$ and $t$ such that:
	 	\begin{equation*}
	 		\sum_{i=0}^{g-1}x_{1}^{g-1-i}y_{1}^{i}+(x_{1}-y_{1})z_{1}^{g}=s, \quad \sum_{i=0}^{g-1}x_{2}^{g-1-i}y_{2}^{i}+(x_{2}-y_{2})z_{2}^{g}=t.
	 	\end{equation*}
For each $s,t$, there are at most $g-1$ solutions of $y_{1},y_{2}$, thus there are at most $O(X^{\frac{2}{g}+\epsilon})$ solutions. Thus we get the estimate in \eqref{eq:6}.

\end{proof}

\end{document}